\begin{document}
\theoremstyle{plain}
\newtheorem{thm}{Theorem}[section]
\newtheorem*{thm1}{Theorem 1}
\newtheorem*{thm2}{Theorem 2}
\newtheorem{lemma}[thm]{Lemma}
\newtheorem{lem}[thm]{Lemma}
\newtheorem{cor}[thm]{Corollary}
\newtheorem{prop}[thm]{Proposition}
\newtheorem{propose}[thm]{Proposition}
\newtheorem{variant}[thm]{Variant}
\theoremstyle{definition}
\newtheorem{notations}[thm]{Notations}
\newtheorem{rem}[thm]{Remark}
\newtheorem{rmk}[thm]{Remark}
\newtheorem{rmks}[thm]{Remarks}
\newtheorem{defn}[thm]{Definition}
\newtheorem{ex}[thm]{Example}
\newtheorem{claim}[thm]{Claim}
\newtheorem{ass}[thm]{Assumption}
\numberwithin{equation}{section}
\newcounter{elno}                
\def\points{\list
{\hss\llap{\upshape{(\roman{elno})}}}{\usecounter{elno}}} 
\let\endpoints=\endlist


\catcode`\@=11
%
%
\def\opn#1#2{\def#1{\mathop{\kern0pt\fam0#2}\nolimits}} 
\def\bold#1{{\bf #1}}%
\def\underrightarrow{\mathpalette\underrightarrow@}
\def\underrightarrow@#1#2{\vtop{\ialign{$##$\cr
 \hfil#1#2\hfil\cr\noalign{\nointerlineskip}%
 #1{-}\mkern-6mu\cleaders\hbox{$#1\mkern-2mu{-}\mkern-2mu$}\hfill
 \mkern-6mu{\to}\cr}}}
\let\underarrow\underrightarrow
\def\underleftarrow{\mathpalette\underleftarrow@}
\def\underleftarrow@#1#2{\vtop{\ialign{$##$\cr
 \hfil#1#2\hfil\cr\noalign{\nointerlineskip}#1{\leftarrow}\mkern-6mu
 \cleaders\hbox{$#1\mkern-2mu{-}\mkern-2mu$}\hfill
 \mkern-6mu{-}\cr}}}
%
%

%
\def\:{\colon}
\let\oldtilde=\tilde
\def\tilde#1{\mathchoice{\widetilde{#1}}{\widetilde{#1}}%
{\indextil{#1}}{\oldtilde{#1}}}
\def\indextil#1{\lower2pt\hbox{$\textstyle{\oldtilde{\raise2pt%
\hbox{$\scriptstyle{#1}$}}}$}}
\def\pnt{{\raise1.1pt\hbox{$\textstyle.$}}}
%

%
\let\amp@rs@nd@\relax
\newdimen\ex@\ex@.2326ex
\newdimen\bigaw@l
\newdimen\minaw@
\minaw@16.08739\ex@
\newdimen\minCDaw@
\minCDaw@2.5pc
\newif\ifCD@
\def\minCDarrowwidth#1{\minCDaw@#1}
\newenvironment{CD}{\@CD}{\@endCD}
\def\@CD{\def\A##1A##2A{\llap{$\vcenter{\hbox
 {$\scriptstyle##1$}}$}\Big\uparrow\rlap{$\vcenter{\hbox{%
$\scriptstyle##2$}}$}&&}%
\def\V##1V##2V{\llap{$\vcenter{\hbox
 {$\scriptstyle##1$}}$}\Big\downarrow\rlap{$\vcenter{\hbox{%
$\scriptstyle##2$}}$}&&}%
\def\={&\hskip.5em\mathrel
 {\vbox{\hrule width\minCDaw@\vskip3\ex@\hrule width
 \minCDaw@}}\hskip.5em&}%
\def\verteq{\Big\Vert&&}%
\def\noarr{&&}%
\def\vspace##1{\noalign{\vskip##1\relax}}\relax\let\amp@rs@nd@&\iffalse}\fi
 \CD@true\vcenter\bgroup\relax\let\\=\cr\iffalse}\fi\tabskip\z@skip\baselineskip20\ex@
 \lineskip3\ex@\lineskiplimit3\ex@\halign\bgroup
 &\hfill$\m@th##$\hfill\cr}
\def\@endCD{\cr\egroup\egroup}
%
\def\>#1>#2>{\amp@rs@nd@\setbox\z@\hbox{$\scriptstyle
 \;{#1}\;\;$}\setbox\@ne\hbox{$\scriptstyle\;{#2}\;\;$}\setbox\tw@
 \hbox{$#2$}\ifCD@
 \global\bigaw@\minCDaw@\else\global\bigaw@\minaw@\fi
 \ifdim\wd\z@>\bigaw@\global\bigaw@\wd\z@\fi
 \ifdim\wd\@ne>\bigaw@\global\bigaw@\wd\@ne\fi
 \ifCD@\hskip.5em\fi
 \ifdim\wd\tw@>\z@
 \mathrel{\mathop{\hbox to\bigaw@{\rightarrowfill}}\limits^{#1}_{#2}}\else
 \mathrel{\mathop{\hbox to\bigaw@{\rightarrowfill}}\limits^{#1}}\fi
 \ifCD@\hskip.5em\fi\amp@rs@nd@}
\def\<#1<#2<{\amp@rs@nd@\setbox\z@\hbox{$\scriptstyle
 \;\;{#1}\;$}\setbox\@ne\hbox{$\scriptstyle\;\;{#2}\;$}\setbox\tw@
 \hbox{$#2$}\ifCD@
 \global\bigaw@\minCDaw@\else\global\bigaw@\minaw@\fi
 \ifdim\wd\z@>\bigaw@\global\bigaw@\wd\z@\fi
 \ifdim\wd\@ne>\bigaw@\global\bigaw@\wd\@ne\fi
 \ifCD@\hskip.5em\fi
 \ifdim\wd\tw@>\z@
 \mathrel{\mathop{\hbox to\bigaw@{\leftarrowfill}}\limits^{#1}_{#2}}\else
 \mathrel{\mathop{\hbox to\bigaw@{\leftarrowfill}}\limits^{#1}}\fi
 \ifCD@\hskip.5em\fi\amp@rs@nd@}
%
%
\newenvironment{CDS}{\@CDS}{\@endCDS}
\def\@CDS{\def\A##1A##2A{\llap{$\vcenter{\hbox
 {$\scriptstyle##1$}}$}\Big\uparrow\rlap{$\vcenter{\hbox{%
$\scriptstyle##2$}}$}&}%
\def\V##1V##2V{\llap{$\vcenter{\hbox
 {$\scriptstyle##1$}}$}\Big\downarrow\rlap{$\vcenter{\hbox{%
$\scriptstyle##2$}}$}&}%
\def\={&\hskip.5em\mathrel
 {\vbox{\hrule width\minCDaw@\vskip3\ex@\hrule width
 \minCDaw@}}\hskip.5em&}
\def\verteq{\Big\Vert&}
\def\novarr{&}
\def\noharr{&&}
\def\SE##1E##2E{\slantedarrow(0,18)(4,-3){##1}{##2}&}
\def\SW##1W##2W{\slantedarrow(24,18)(-4,-3){##1}{##2}&}
\def\NE##1E##2E{\slantedarrow(0,0)(4,3){##1}{##2}&}
\def\NW##1W##2W{\slantedarrow(24,0)(-4,3){##1}{##2}&}
\def\slantedarrow(##1)(##2)##3##4{%
\thinlines\unitlength1pt\lower 6.5pt\hbox{\begin{picture}(24,18)%
\put(##1){\vector(##2){24}}%
\put(0,8){$\scriptstyle##3$}%
\put(20,8){$\scriptstyle##4$}%
\end{picture}}}
\def\vspace##1{\noalign{\vskip##1\relax}}\relax\let\amp@rs@nd@&\iffalse}\fi
 \CD@true\vcenter\bgroup\relax\let\\=\cr\iffalse}\fi\tabskip\z@skip\baselineskip20\ex@
 \lineskip3\ex@\lineskiplimit3\ex@\halign\bgroup
 &\hfill$\m@th##$\hfill\cr}
\def\@endCDS{\cr\egroup\egroup}
%
\newdimen\TriCDarrw@
\newif\ifTriV@
\newenvironment{TriCDV}{\@TriCDV}{\@endTriCD}
\newenvironment{TriCDA}{\@TriCDA}{\@endTriCD}
\def\@TriCDV{\TriV@true\def\TriCDpos@{6}\@TriCD}
\def\@TriCDA{\TriV@false\def\TriCDpos@{10}\@TriCD}
\def\@TriCD#1#2#3#4#5#6{%
\setbox0\hbox{$\ifTriV@#6\else#1\fi$}
\TriCDarrw@=\wd0 \advance\TriCDarrw@ 24pt
\advance\TriCDarrw@ -1em
\def\SE##1E##2E{\slantedarrow(0,18)(2,-3){##1}{##2}&}
\def\SW##1W##2W{\slantedarrow(12,18)(-2,-3){##1}{##2}&}
\def\NE##1E##2E{\slantedarrow(0,0)(2,3){##1}{##2}&}
\def\NW##1W##2W{\slantedarrow(12,0)(-2,3){##1}{##2}&}
\def\slantedarrow(##1)(##2)##3##4{\thinlines\unitlength1pt
\lower 6.5pt\hbox{\begin{picture}(12,18)%
\put(##1){\vector(##2){12}}%
\put(-4,\TriCDpos@){$\scriptstyle##3$}%
\put(12,\TriCDpos@){$\scriptstyle##4$}%
\end{picture}}}
\def\={\mathrel {\vbox{\hrule
   width\TriCDarrw@\vskip3\ex@\hrule width
   \TriCDarrw@}}}
\def\>##1>>{\setbox\z@\hbox{$\scriptstyle
 \;{##1}\;\;$}\global\bigaw@\TriCDarrw@
 \ifdim\wd\z@>\bigaw@\global\bigaw@\wd\z@\fi
 \hskip.5em
 \mathrel{\mathop{\hbox to \TriCDarrw@
{\rightarrowfill}}\limits^{##1}}
 \hskip.5em}
\def\<##1<<{\setbox\z@\hbox{$\scriptstyle
 \;{##1}\;\;$}\global\bigaw@\TriCDarrw@
 \ifdim\wd\z@>\bigaw@\global\bigaw@\wd\z@\fi
 \mathrel{\mathop{\hbox to\bigaw@{\leftarrowfill}}\limits^{##1}}
 }
 \CD@true\vcenter\bgroup\relax\let\\=\cr\iffalse}\fi
 \tabskip\z@skip\baselineskip20\ex@
 \lineskip3\ex@\lineskiplimit3\ex@
 \ifTriV@
 \halign\bgroup
 &\hfill$\m@th##$\hfill\cr
#1&\multispan3\hfill$#2$\hfill&#3\\
&#4&#5\\
&&#6\cr\egroup%
\else
 \halign\bgroup
 &\hfill$\m@th##$\hfill\cr
&&#1\\%
&#2&#3\\
#4&\multispan3\hfill$#5$\hfill&#6\cr\egroup
\fi}
\def\@endTriCD{\egroup} 
\newcommand{\mc}{\mathcal} 
\newcommand{\mb}{\mathbb} 
\newcommand{\surj}{\twoheadrightarrow} 
\newcommand{\inj}{\hookrightarrow} \newcommand{\zar}{{\rm zar}} 
\newcommand{\an}{{\rm an}} \newcommand{\red}{{\rm red}} 
\newcommand{\Rank}{{\rm rk}} \newcommand{\codim}{{\rm codim}} 
\newcommand{\rank}{{\rm rank}} \newcommand{\Ker}{{\rm Ker \ }} 
\newcommand{\Pic}{{\rm Pic}} \newcommand{\Div}{{\rm Div}} 
\newcommand{\Hom}{{\rm Hom}} \newcommand{\im}{{\rm im}} 
\newcommand{\Spec}{{\rm Spec \,}} \newcommand{\Sing}{{\rm Sing}} 
\newcommand{\sing}{{\rm sing}} \newcommand{\reg}{{\rm reg}} 
\newcommand{\Char}{{\rm char}} \newcommand{\Tr}{{\rm Tr}} 
\newcommand{\Gal}{{\rm Gal}} \newcommand{\Min}{{\rm Min \ }} 
\newcommand{\Max}{{\rm Max \ }} \newcommand{\Alb}{{\rm Alb}\,} 
\newcommand{\GL}{{\rm GL}\,} 
\newcommand{\ie}{{\it i.e.\/},\ } \newcommand{\niso}{\not\cong} 
\newcommand{\nin}{\not\in} 
\newcommand{\soplus}[1]{\stackrel{#1}{\oplus}} 
\newcommand{\by}[1]{\stackrel{#1}{\rightarrow}} 
\newcommand{\longby}[1]{\stackrel{#1}{\longrightarrow}} 
\newcommand{\vlongby}[1]{\stackrel{#1}{\mbox{\large{$\longrightarrow$}}}} 
\newcommand{\ldownarrow}{\mbox{\Large{\Large{$\downarrow$}}}} 
\newcommand{\lsearrow}{\mbox{\Large{$\searrow$}}} 
\renewcommand{\d}{\stackrel{\mbox{\scriptsize{$\bullet$}}}{}} 
\newcommand{\dlog}{{\rm dlog}\,} 
\newcommand{\longto}{\longrightarrow} 
\newcommand{\vlongto}{\mbox{{\Large{$\longto$}}}} 
\newcommand{\limdir}[1]{{\displaystyle{\mathop{\rm lim}_{\buildrel\longrightarrow\over{#1}}}}\,} 
\newcommand{\liminv}[1]{{\displaystyle{\mathop{\rm lim}_{\buildrel\longleftarrow\over{#1}}}}\,} 
\newcommand{\norm}[1]{\mbox{$\parallel{#1}\parallel$}} 
\newcommand{\boxtensor}{{\Box\kern-9.03pt\raise1.42pt\hbox{$\times$}}} 
\newcommand{\into}{\hookrightarrow} \newcommand{\image}{{\rm image}\,} 
\newcommand{\Lie}{{\rm Lie}\,} 
\newcommand{\CM}{\rm CM}
\newcommand{\sext}{\mbox{${\mathcal E}xt\,$}} 
\newcommand{\shom}{\mbox{${\mathcal H}om\,$}} 
\newcommand{\coker}{{\rm coker}\,} 
\newcommand{\sm}{{\rm sm}} 
\newcommand{\tensor}{\otimes} 
\renewcommand{\iff}{\mbox{ $\Longleftrightarrow$ }} 
\newcommand{\supp}{{\rm supp}\,} 
\newcommand{\ext}[1]{\stackrel{#1}{\wedge}} 
\newcommand{\onto}{\mbox{$\,\>>>\hspace{-.5cm}\to\hspace{.15cm}$}} 
\newcommand{\propsubset} {\mbox{$\textstyle{ 
\subseteq_{\kern-5pt\raise-1pt\hbox{\mbox{\tiny{$/$}}}}}$}} 
\newcommand{\sB}{{\mathcal B}} \newcommand{\sC}{{\mathcal C}} 
\newcommand{\sD}{{\mathcal D}} \newcommand{\sE}{{\mathcal E}} 
\newcommand{\sF}{{\mathcal F}} \newcommand{\sG}{{\mathcal G}} 
\newcommand{\sH}{{\mathcal H}} \newcommand{\sI}{{\mathcal I}} 
\newcommand{\sJ}{{\mathcal J}} \newcommand{\sK}{{\mathcal K}} 
\newcommand{\sL}{{\mathcal L}} \newcommand{\sM}{{\mathcal M}} 
\newcommand{\sN}{{\mathcal N}} \newcommand{\sO}{{\mathcal O}} 
\newcommand{\sP}{{\mathcal P}} \newcommand{\sQ}{{\mathcal Q}} 
\newcommand{\sR}{{\mathcal R}} \newcommand{\sS}{{\mathcal S}} 
\newcommand{\sT}{{\mathcal T}} \newcommand{\sU}{{\mathcal U}} 
\newcommand{\sV}{{\mathcal V}} \newcommand{\sW}{{\mathcal W}} 
\newcommand{\sX}{{\mathcal X}} \newcommand{\sY}{{\mathcal Y}} 
\newcommand{\sZ}{{\mathcal Z}} \newcommand{\ccL}{\sL} 
 \newcommand{\A}{{\mathbb A}} \newcommand{\B}{{\mathbb 
B}} \newcommand{\C}{{\mathbb C}} \newcommand{\D}{{\mathbb D}} 
\newcommand{\E}{{\mathbb E}} \newcommand{\F}{{\mathbb F}} 
\newcommand{\G}{{\mathbb G}} \newcommand{\HH}{{\mathbb H}} 
\newcommand{\I}{{\mathbb I}} \newcommand{\J}{{\mathbb J}} 
\newcommand{\M}{{\mathbb M}} \newcommand{\N}{{\mathbb N}} 
\renewcommand{\P}{{\mathbb P}} \newcommand{\Q}{{\mathbb Q}} 

\newcommand{\R}{{\mathbb R}} \newcommand{\T}{{\mathbb T}} 
\newcommand{\U}{{\mathbb U}} \newcommand{\V}{{\mathbb V}} 
\newcommand{\W}{{\mathbb W}} \newcommand{\X}{{\mathbb X}} 
\newcommand{\Y}{{\mathbb Y}} \newcommand{\Z}{{\mathbb Z}} 
\title[Nondiscreteness of $F$-thresholds] 
{Nondiscreteness of $F$-thresholds} 
\author{V. Trivedi} 
\address{School of Mathematics, Tata Institute of 
Fundamental Research, Homi Bhabha Road, Mumbai-400005, India} 
\email{vija@math.tifr.res.in} 
\date{}

\begin{abstract}{We give  examples of two dimensional normal $\Q$-Gorenstein
graded domains, where the set of $F$-thresholds of the maximal ideal is not discrete,
thus answering a question by Musta\c{t}\u{a}-Takagi-Watanabe. 

We also prove that, for a two dimensional standard graded domain $(R, {\bf m})$ over a 
field of characteristic $0$, with  graded ideal $I$, if $({\bf m}_p, I_p)$ is 
a {\em reduction mod} $p$ of $({\bf m}, I)$ then  
 $c^{I_p}({\bf m}_p) \neq c^I_{\infty}({\bf m})$ implies  
$c^{I_p}({\bf m}_p)$ has $p$ in the denominator.

}\end{abstract} 

\maketitle\section{Introduction}

Let $(R, {\bf m})$ be a Noetherian local ring of positive characteristic $p$. For an ideal 
$I$ of  $R$, a set of invariants of singularities 
in positive characteristic, called $F$-thresholds,
 were introduced by [MTW] as follows
$$ \{F\mbox{-thresholds of}~~I\} := 
\{c^J(I)\mid J \subseteq {\bf m}~~\mbox{such that}~~I\subseteq \mbox{Rad}(J) \},$$
where $c^J(I) := \lim_{e\to \infty}{\max\{r\mid I^r\nsubseteq J^{[p^e]}\}}/{p^e}.$
In [MTW], it was shown that 
for regular local $F$-finite rings, 
the $F$-thresholds of an ideal 
 coincide with the $F$-jumping numbers of the generalized test 
ideals of $I$ (introduced by [HY]), which  are analogous to the jumping numbers of 
a multiplier ideal in characteristic $0$.
The first $F$-jumping number (introduced by [TaW]
under the name  $F$-pure threshold), denoted by $\mbox{fpt}$, corresponds to the first 
jumping number of  the associated multiplier ideal  and is called log 
canonical thershold of $I$.
The set of the jumping numbers, for a given ideal, 
is known to be discrete and rational.

Here we consider the following question 
by Musta\c{t}\u{a}-Takagi-Watanabe (Question~2.11 in [MTW]).

\vspace{5pt}
{\bf Question}.\quad Given an ideal $(0) \neq I \subseteq {\bf m}$, could 
there exist finite accumulation points for the set of $F$-thresholds of $I$? 

\vspace{5pt}

In the case of regular rings  (with some additional mild conditions), the set of $F$-jumping 
numbers for $I$ is equal to the set of $F$-thresholds 
$\{c^J(I)\}_J$ of $I$ (Corollary~2.3 in [BMS2]). On the other hand, in such cases,
it has been proven that 
the $F$-jumping numbers are 
discrete and rational (see [BMS1], [BMS2],  [KLZ]) (in fact, as pointed out in [BMS2], 
the discreteness of  
the set of $F$-jumping
numbers implies the rationality statement due to the 
fact that if $\lambda $ is an
$F$-jumping number, then so are the fractional parts
of $p^e\lambda $, for all $e \geq1$).

Though the discreteness of the set of $F$-jumping numbers are known in some 
singular cases too {\em e.g.} when the ring is $F$-finite  normal $\Q$-Gorenstein domain 
([GrS], [BSTZ], [KSSZ], [ST]), we cannot 
conclude the same for $F$-thresholds as they can be in general different from 
   the $F$-jumping numbers, as shown by Example~2.5 in [TaW], where 
the ring  $R = k[x,y,z]/(xy-z^2)$,
and the first $F$-jumping number of ${\bf m} = (x, y, z)$,
 $\mbox{fpt}_{\bf m}({\bf m}) <c^{\bf m}({\bf m})$, the first $F$-threshold of ${\bf m}$.

However
 when $R$ is a direct summand of a regular 
$F$-finite domain $S$, then the set $\{c^J(I)\}_J$ is known to be a  
discrete set of rational points (Proposition~4.17 in [HMNb]). 
Here the authors 
extend the theory  
of Bernstein-Sato polynomial to the direct summands of regular rings, while
for regular rings the authors in [MTW] relate the Bernstein-Sato polynomials to 
the $F$-jumping numbers and the $F$-thresholds. 
Now in [HMNb], each $c^J(I)$  is identified 
with $c^{JS}(IS)$ and hence is an 
$F$-jumping number of $IS$.

In particular,  in all of the above cases, the  $F$-thresholds of an ideal have
been studied by identifying them with the  $F$-jumping numbers of some ideal in a 
regular ring where such set is discrete and consist of rational numbers.

In [TrW], using the theory of the Hilbert-Kunz density functions 
for graded rings and Frobenius semistability properties  of vector bundles on
projective  curves, 
we had shown that 
in dimension two, the  $F$-thresholds of the  maximal ideal at graded ideals 
can be expressed in terms of the  
Harder-Narasimhan slopes of the associated syzygy bundles. As a result, 
we had deduced that  
the set
$\{c^{I}({\bf m})\mid I~~\mbox{is graded}\}_I$ consists of rational points.
 
In this paper, we use this new viewpoint to  show  that such a set 
can  have accumulation points. More precisely we prove the following 

\begin{thm}\label{t1}
Given a prime $p$ and an integer $g>1$, there is a two-dimensional 
standard graded normal $\Q$-Gorenstein domain 
$(R, {\bf m})$ (a cone over a nonsingular curve of genus $g$)
 over an
algebraically field of char $p>0$ and  a sequence of ${\bf m}$-primary graded 
ideals $\{I_m\}_{m\geq 0}$ such that, the $F$-threshold of ${\bf m}$ at $I_m$,
$$c^{I_m}({\bf m}) = \frac{3}{2}+\frac{(g-1)}{p^{m+m_0}d},
\quad\mbox{for}\quad m\geq 0,$$
where $d = e_0(R,{\bf m})$ and $m_0\geq 0$ is an integer such that $p^{m_0} <g$.
Moreover, each $I_m$ is generated by three elements, each  of degree $1$ in $R$.
\end{thm}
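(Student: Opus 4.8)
The plan is first to convert the computation of $c^{I_m}({\bf m})$ into a slope computation on the curve via the main result of [TrW], and then to engineer the curve and the ideals so that the relevant Harder--Narasimhan slopes, after Frobenius pull-back, exhibit a prescribed ``delayed destabilization''. Concretely I would take $R$ to be the section ring $\bigoplus_{n\ge0}H^0(X,\sO_X(n))$ of a smooth projective curve $X$ of genus $g$ over $\bar k$ ($\Char\bar k=p$), where $\sO_X(1)$ is a very ample, projectively normal line bundle of \emph{even} degree $d$ chosen so that $\omega_X$ is a $\Q$-multiple of $\sO_X(1)$ in $\Pic(X)\otimes\Q$ (e.g.\ $\sO_X(1)=\omega_X^{\otimes t}$ for suitable $t$, twisted if needed to make $d$ even). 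Then $R$ is a two-dimensional standard graded normal $\Q$-Gorenstein domain, the cone over $X$ in the corresponding embedding, and $e_0(R,{\bf m})=d$. For three linear forms $g_1,g_2,g_3\in R_1=H^0(X,\sO_X(1))$ with no common zero on $X$, the ideal $I=(g_1,g_2,g_3)$ is ${\bf m}$-primary and graded, its sheafified syzygies $\sS_I=\ker(\sO_X(-1)^{3}\by{(g_i)}\sO_X)$ form a rank-$2$ bundle with $\det\sS_I=\sO_X(-3)$, and I set $\sV_I=\sS_I^{\vee}$ (rank $2$, $\deg\sV_I=3d$).

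Applying $F^{e*}$ to $0\to\sS_I\to\sO_X(-1)^{3}\to\sO_X\to0$, twisting by $\sO_X(s)$ and taking cohomology identifies $(R/I^{[p^e]})_s$ with $H^1(X,F^{e*}\sS_I(s))$ once $s-p^e$ exceeds a constant independent of $e$ (using that $R$ is the full, projectively normal section ring). Since ${\bf m}^r\subseteq I^{[q]}$ exactly when $(R/I^{[q]})_s=0$ for all $s\ge r$, the numerator $\max\{r:{\bf m}^r\nsubseteq I^{[p^e]}\}$ in the definition of $c^{I}({\bf m})$ equals $\max\{s:(R/I^{[p^e]})_s\ne0\}$, and via the Harder--Narasimhan filtration of the rank-$2$ bundle $F^{e*}\sS_I(s)$ (whose two graded pieces are line bundles, so their $H^1$-vanishing thresholds are computable up to $O(1)$) this differs from $\mu_{\max}(F^{e*}\sV_I)/d$ by a bounded amount. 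Dividing by $p^e$ and passing to the limit — this slope formula is the content of [TrW] — I get $c^{I}({\bf m})=\bar\mu_{\max}(\sV_I)/d$, where $\bar\mu_{\max}(\sV_I)=\lim_{e\to\infty}\mu_{\max}(F^{e*}\sV_I)/p^{e}$ is the rational number (attained for $e\gg0$ by Langer's theorem) measuring the asymptotic instability of Frobenius pull-backs of $\sV_I$; in particular $c^{I}({\bf m})=3/2$ precisely when $\sV_I$ is strongly semistable. The theorem is thus reduced to constructing, for each $m\ge0$, three such linear forms whose bundle $\sV_m:=\sV_{I_m}$ (rank $2$, $\det\sV_m=\sO_X(3)$) is semistable, has $F^{e*}\sV_m$ semistable for $0\le e\le m+m_0-1$, and has $F^{(m+m_0)*}\sV_m$ with Harder--Narasimhan filtration $0\to N_1\to F^{(m+m_0)*}\sV_m\to N_2\to0$ realizing the extremal gap $\deg N_1-\deg N_2=2g-2$ permitted (Shepherd-Barron, Sun) for Frobenius pull-backs of semistable rank-$2$ bundles. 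Indeed, $N_1,N_2$ being line bundles (hence strongly semistable) then forces $\mu_{\max}(F^{e*}\sV_m)=\tfrac32 p^{e}d+p^{e-(m+m_0)}(g-1)$ for all $e\ge m+m_0$, whence $\bar\mu_{\max}(\sV_m)=\tfrac32 d+(g-1)/p^{m+m_0}$ and $c^{I_m}({\bf m})=\tfrac32+(g-1)/(p^{m+m_0}d)$, with $I_m$ generated by three degree-one elements — exactly the asserted formula. (Evenness of $d$ makes $\tfrac32 p^{m+m_0}d$ integral; the value $m_0$ with $p^{m_0}<g$ emerges from the construction.)

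To build $\sV_m$ I would take it to be a non-split extension $0\to A\to\sV_m\to B\to0$ with $A\otimes B^{-1}\cong\omega_X^{-1}$ and $A^{\otimes2}\cong\sO_X(3)\otimes\omega_X^{-1}$, so that $\deg A=\tfrac32 d-(g-1)$, $\deg B=\tfrac32 d+(g-1)$, $\det\sV_m=\sO_X(3)$, and the extension class $e_m$ lies in $\P\,H^1(X,\omega_X^{-1})$. Then $F^{e*}\sV_m$ is the extension of $F^{e*}B$ by $F^{e*}A$ with class $\Phi_e(e_m)$, for $\Phi_e\colon H^1(\omega_X^{-1})\to H^1(\omega_X^{-p^e})$ the Frobenius pull-back; semistability of $F^{e*}\sV_m$ — and, when it fails, the precise degree of its maximal destabilizing sub-line bundle, including the extremal case $\deg N_1-\deg N_2=2g-2$ — is governed by whether $\Phi_e(e_m)$ avoids, or lands in the appropriate stratum of, an explicit closed ``instability'' subset $Z_e\subseteq H^1(\omega_X^{-p^e})$ cut out by the vanishing of the class after twisting down by effective divisors. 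As non-semistability is preserved by $F^*$, the preimages $\Phi_e^{-1}(Z_e)\subseteq H^1(\omega_X^{-1})$ form an increasing chain, and the crux is to choose $X$ — exploiting the freedom in $p$, $g$ and the moduli of $X$ (for instance a curve carrying a line bundle subject to prescribed Cartier-operator / $p$-divisibility conditions, or built from a suitable Artin--Schreier-type tower) — so that this chain is \emph{strictly} increasing and so that at each level $m+m_0$ the extremal stratum is hit; matching $2g-2$ against the first forced destabilization level is where $p^{m_0}<g$ comes in. Finally I would check syzygy realizability: $\sV_m(-1)$ has degree $d$ and can be arranged globally generated with $h^0(\sV_m(-1))\ge3$, and a general three-dimensional subspace $U\subseteq H^0(\sV_m(-1))$ evaluates onto $\sV_m(-1)$ with kernel $\cong\sO_X(-1)$; dualizing exhibits $\sV_m=\sV_{I_m}$ for three degree-one forms with no common zero on $X$.

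The main obstacle is this last construction: exhibiting a single genus-$g$ curve in characteristic $p$ that supports, for \emph{every} $m\ge0$ simultaneously, an extension of the above shape whose Frobenius pull-backs stay semistable for exactly $m+m_0-1$ steps and then destabilize by the extremal amount $2g-2$, all while remaining inside the locus of syzygy bundles of three linear forms of the fixed polarization. This requires fusing a Raynaud/Gieseker-type mechanism (which produces the extremal Harder--Narasimhan polygon after a single Frobenius pull-back, giving the base case) with a device for postponing the destabilization by an arbitrary prescribed number of Frobenius steps — the strict growth of the instability loci on the specially chosen curve — together with the global-generation bookkeeping that makes the bundles genuine syzygy bundles. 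Granting the construction, the return trip to $F$-thresholds and the closed formula is immediate from [TrW] and the slope bookkeeping above.
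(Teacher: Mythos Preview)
Your reduction is exactly the paper's: use the slope formula from [TrW] (Remark~\ref{r2}) to rewrite $c^{I_m}({\bf m})$ as $1-a_{min}(V_0)/d$ for the syzygy bundle $V_0$ of three degree-$1$ generators, and then translate the problem into producing rank-$2$ bundles of fixed determinant on a genus-$g$ curve whose Frobenius pull-backs stay semistable for a prescribed number of steps and then destabilize with a controlled HN gap. The bookkeeping you do (global generation, three sections, dualizing to get the syzygy sequence, $\sO_X(1)$ a power of $\omega_X$ to get $\Q$-Gorenstein) matches the paper's Lemma~\ref{l1} and the argument following it.

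The gap is the part you yourself flag as ``the main obstacle'': you propose to build each $\sV_m$ as an extension with class $e_m\in H^1(\omega_X^{-1})$, and then to \emph{choose the curve} so that the Frobenius preimages $\Phi_e^{-1}(Z_e)$ of the instability loci form a strictly increasing chain hitting the extremal stratum at every level. No mechanism for this is given, and it is the entire content of the theorem. The paper sidesteps the whole extension-class analysis by using Gieseker's construction directly: the representation-theoretic origin of the bundle $F_\rho$ in [G] produces not just one bundle but a sequence $\{F_k\}_{k\ge1}$ with the built-in relation $F^*F_{k+1}\simeq F_k$ and with $F_1$ already carrying the explicit HN filtration $L\subset F_1$, $\deg L=g-1$. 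Thus $F_{m+m_0+1}$ automatically stays semistable for exactly $m$ Frobenius steps and then destabilizes with the right sub-line bundle; after twisting by a degree-$0$ line bundle to normalize the determinant (the $E_m$ of Lemma~\ref{l1}), the delayed-destabilization family exists for every $m$ simultaneously on one fixed curve, with no stratification argument needed. You mention a ``Raynaud/Gieseker-type mechanism'' only for the base case, but the point is that Gieseker's construction already contains the delay device via $F^*F_{k+1}=F_k$; once you invoke that, your outline becomes the paper's proof.
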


This answers the above question (of [MTW]) affirmatively.
In particular we have the following

\begin{cor}\label{c1}Given a prime   $p$ and an integer $g>1$,
there exists a  two dimensional standard  graded 
normal $\Q$-Gorenstein domain $R$ with the graded maximal ideal ${\bf m}$ 
such that the set of $F$-thresholds of ${\bf m}$ has 
accumulation points, where   
$\mbox{Proj}~R = X$ is a 
 nonsigular projective curve of genus $g$ over  a field of char~$p$.

Moreover there is a strictly decreasing sequence consisting of $F$-thresholds of 
${\bf m}$; thus, the $F$-thresholds of an ideal need not satisfy the descending chain condition.
\end{cor}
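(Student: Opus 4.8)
The plan is to read the Corollary off from Theorem~\ref{t1} by a short formal verification, since all the substantive work already sits in the theorem. First I would fix the prime $p$ and the integer $g>1$ and invoke Theorem~\ref{t1} to produce the two-dimensional standard graded normal $\Q$-Gorenstein domain $(R,{\bf m})$ together with the family of ${\bf m}$-primary graded ideals $\{I_m\}_{m\geq 0}$ for which
$$c^{I_m}({\bf m})=\frac{3}{2}+\frac{g-1}{p^{m+m_0}d},\qquad m\geq 0,$$
where $d=e_0(R,{\bf m})$ and $p^{m_0}<g$. Since $R$ is, by its construction, a cone over a nonsingular projective curve of genus $g$, one has $\mbox{Proj}\,R=X$ with $X$ such a curve, which disposes of the geometric part of the statement.

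Next I would check that each number $c^{I_m}({\bf m})$ genuinely belongs to the set of $F$-thresholds of ${\bf m}$ as defined in [MTW]. Since $I_m$ is ${\bf m}$-primary, $\mbox{Rad}(I_m)={\bf m}$, so $I_m$ is an admissible choice of $J$ in the defining set $\{c^J({\bf m})\mid J\subseteq{\bf m},\ {\bf m}\subseteq\mbox{Rad}(J)\}$; hence the whole sequence $\big(c^{I_m}({\bf m})\big)_{m\geq 0}$ lies inside the set of $F$-thresholds of ${\bf m}$.

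Finally I would analyze this sequence. As $g-1>0$ and $p^{m+m_0}d$ is strictly increasing in $m$, the terms $(g-1)/(p^{m+m_0}d)$ are strictly decreasing positive reals tending to $0$, so $c^{I_0}({\bf m})>c^{I_1}({\bf m})>\cdots$ is a strictly decreasing sequence of $F$-thresholds converging to $3/2$. This yields both conclusions at once: $3/2$ is a finite accumulation point of the set of $F$-thresholds of ${\bf m}$ (every neighbourhood of $3/2$ meets this set in infinitely many distinct points $c^{I_m}({\bf m})$, all of which lie strictly above $3/2$), and the strictly decreasing infinite chain $c^{I_0}({\bf m})>c^{I_1}({\bf m})>\cdots$ of $F$-thresholds witnesses the failure of the descending chain condition. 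I do not expect any real obstacle here: the one point deserving attention is that the ideals $I_m$ supplied by Theorem~\ref{t1} are ${\bf m}$-primary, so that the invariants computed there are legitimately $F$-thresholds of ${\bf m}$ --- and this is built into the statement of the theorem --- after which the Corollary is a purely formal consequence.
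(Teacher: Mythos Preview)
Your proposal is correct and matches the paper's approach: the paper states the corollary as an immediate consequence of Theorem~\ref{t1} (``In particular we have the following''), and gives no separate proof, since the strictly decreasing sequence $c^{I_m}({\bf m})=\tfrac{3}{2}+\tfrac{g-1}{p^{m+m_0}d}\to\tfrac{3}{2}$ directly witnesses both the accumulation point and the failure of DCC. The one verification you add---that ${\bf m}$-primarity of $I_m$ ensures these are legitimate $F$-thresholds of ${\bf m}$---is exactly the right bookkeeping.
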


For the proof of Theorem~\ref{t1},  we crucially use the following construction by 
D. Gieseker in [G]. For a given $p$ and $g> 1$,
there exists a family $X$ of stable curves of genus $g$ over $\Spec k[[t]]$
 ($k$ is an 
algebraically closed field of $\Char~p$) with smooth generic fiber, and a closed fiber 
with particular singularities. By taking  a specific 
representation of $G$ (analogus to the representation arising from 
a Schottky uniformization for a compact Riemann surface of genus $g$), where 
$G$ is  the group of covering transformations of $Y_0$ (and
where $Y_0$ is the 
universal cover over the special fiber $X_0$ of $X$), Gieseker  constructed a 
rank $2$ vector bundle $F_1$ on the generic fiber $X_K$ ($K = k((t))$ with 
an explicit 
Harder-Narasimhan filtration. Moreover the bundle $F_1$,  associated to 
the  representation of $G$, comes equipped with a sequence $\{F_k\}_{k\geq 1}$ 
of bundles such that 
$F^*F_{k+1} = F_k$.

From this sequence  we construct  a set of  
vector bundles  with the 
similar properties such that the new set is also a  
{\em bounded family} of 
bundles on the curve $X_K$. 
 By choosing 
$\sL =$  the power of the canonical bundle of the curve, we ensure that the 
coordinate  ring (corresponding to the embedding of the curve by $\sL$) 
is $\Q$-Gorenstein.

\vspace{5pt}

Next, we consider some behaviour  of the $F$-thresholds of  
{\em reductions mod}~$p$, as $p$ varies, from our view point 
(relating $F$-thresholds  to 
vector bundles). We recall that Theorem~3.4 and Proposition~3.8 of [HY] imply that, 
for $R=\Z[X_1, \ldots, X_n]$ and $I\subseteq {\bf m} = (X_1, \ldots, X_n)$, we have 
a formula for the log canonical threshold in terms of $F$-pure threshols:
$$\mbox{lct}_{\bf m}(I) = \lim_{p\to \infty}
\mbox{fpt}_{{\bf m}_p}(I_p),$$
where ${\bf m}_p$ and $I_p$ are  {\em reductions mod}~$p$ of ${\bf m}$ and $I$, 
respectively.

 K.Schwede asked the following 
question. Assuming $\mbox{fpt}_{{\bf m}_p}(f_p)\neq \mbox{lct}_{\bf m}(f)$, is the 
denominator of $\mbox{fpt}_{{\bf m}_p}(f_p)$ (in its reduced form)
 a multiple of $p$?

In [CHSW] the authors explored the implication of the following two conditions:
(1) the characteristic does not divide the denominator of the $F$-pure threshold.
(2) The $F$-pure threshold and log canonical threshold coincide.
Theorem~A in [CHSW] and also 
the example~4.5 in [MTW] imply that for 
 an explicit (nonhomogeneous) polynomial $f$ in a polynomial ring
(note that here  the $F$-pure 
threshold $\mbox{fpt}_{{\bf m}_p}(f_p) = c^{{\bf m}_p}(f_p)$), the above two conditions 
could be distinct.

On the other hand, there are examples (see [CHSW] for the references) 
of homogeneous polynomials 
$f$ of specific types where the two conditions are equivalent. 
In [BS] Proposition~5.4, it was shown that for  a 
homogeneous 
polynomial $f$ of degree $d$ in $R = k[X_0, \ldots, X_n]$ 
(where $R/(f)$ is an isolated singularity), if $p\geq nd-d-n$ then either
$c^{{\bf m}_p}(f_p) = (n+1)/d$, or  the denominator of $c^{{\bf m}_p}(f_p)$ 
is a power of $p$. In other words
$$c^{{\bf m}_p}(f_p) \neq  \mbox{lct}_{\bf m}(f) \implies~~
\mbox{the denominator of}~~c^{{\bf m}_p}(f_p)~~\mbox{is a power of}~~p.$$

In this context, here we prove the following

\vspace{5pt}

\begin{thm}\label{t2} Let $R$ be a two dimensional standard graded domain over 
an algebraically 
closed field $k$ of char $0$ and let $I\subset R$ be a graded ideal of finite colength.
Let $R_p$, $I_p$ and ${\bf m}_p$ denote a reduction mod $p$ of $R$, $I$ 
and ${\bf m}$ respectively, where $\Char~R_p = p$.
Then, for $p\gg 0$, 
$$c^{I_p}({\bf m}_p) \neq c^I_{\infty}({\bf m}) \implies
c^{I_p}({\bf m}_p) =  c^I_{\infty}({\bf m}) +\frac{a}{p b},$$
for some   $a, b\in \Z_{+}$ such that $\mbox{g.c.d.}(a, p) = 1$. Moreover
$0< a/b \leq 4(g-1)(r-1)$, where $r+1 =$ the minimal generators of $I$ and 
$g =$ the genus of $\mbox{Proj}~R$.
Therefore 
$$\mbox{for}~~p\gg0,~~~c^{I_p}({\bf m}_p) \neq c^I_{\infty}({\bf m}) \implies
c^{I_p}({\bf m}_p) =   {a_1}/{pb_1},$$
 where  $a_1, b_1\in \Z_{+}$ and $\mbox{g.c.d.}(a_1, p) = 1$.
\end{thm}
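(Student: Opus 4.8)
The plan is to reduce the statement to a computation of Harder--Narasimhan slopes of a syzygy bundle on a curve, using the dictionary between $F$-thresholds of the maximal ideal at a graded ideal and HN-slopes established in [TrW]. Concretely, write $X = \mathrm{Proj}\,R$, a nonsingular projective curve of genus $g$ over $k$, embedded by the very ample line bundle $\sL = \sO_X(1)$ of degree $d = e_0(R,\mathbf{m})$. If $I$ is generated by $r+1$ homogeneous elements of degrees $a_0,\dots,a_r$ (all $1$ after a Veronese reduction, which does not affect the conclusion), the choice of these generators gives a syzygy bundle $V$ on $X$ sitting in $0 \to V \to \bigoplus_{i=0}^r \sO_X(-a_i) \to \sO_X \to 0$, so that $V$ has rank $r$ and a fixed degree. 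In characteristic $0$ one sets $c^I_\infty(\mathbf{m})$ equal to the formula in terms of the HN-slopes of $V$ over $X$; in characteristic $p$ one must instead use the HN-slopes of the Frobenius pullbacks $(F^e)^* V$, i.e. the \emph{strong} HN filtration, and pass to the limit. The first step I would carry out is therefore to recall precisely the slope formula from [TrW] in both settings and record that $c^{I_p}(\mathbf{m}_p)$ is a rational number whose building blocks are the limiting normalized slopes $\lim_e \mu_{\min}((F^e)^*V_p)/p^e$ and $\mu_{\max}$ analogously.

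**Next I would** analyze what can go wrong when one reduces mod $p$. For $p \gg 0$ the bundle $V_p$ on $X_p$ is the reduction of $V$, its ordinary HN filtration is the reduction of the characteristic-$0$ one, and its slopes match those computed over $k$ of characteristic $0$. The only discrepancy between $c^{I_p}(\mathbf{m}_p)$ and $c^I_\infty(\mathbf{m})$ can come from the failure of $V_p$ to be \emph{strongly} semistable on the HN-pieces: if every HN-quotient of $V_p$ were strongly semistable, the strong HN filtration would agree with the ordinary one and one would get $c^{I_p}(\mathbf{m}_p) = c^I_\infty(\mathbf{m})$. So assume some HN-quotient $W$ is not strongly semistable; then by Langer's theorem its instability is ``absorbed'' after finitely many Frobenius pullbacks, and the strong HN slopes of $W$ differ from $\mu(W)$ by the jumps that appear in $(F^e)^*W$ for $e$ large. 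The key quantitative input is the classical bound (Shepherd-Barron / Sun / Langer) controlling the denominators and sizes of these jumps: for a bundle of rank $\le r$ on a curve of genus $g$, any slope appearing in the strong HN filtration of $(F^e)^*W$ lies in $\frac{1}{p^e(\text{rk})}\Z$, and after dividing by $p^e$ the limiting destabilizing slopes lie in $\frac{1}{p\cdot b}\Z$ for some $b$ bounded in terms of $r$. Moreover the total variation $\mu_{\max} - \mu_{\min}$ of $(F^e)^*W$ is bounded by $(p^e - 1)(2g-2)(\mathrm{rk}\,W - 1)$ or a similar expression, which after renormalizing gives the bound $0 < a/b \le 4(g-1)(r-1)$ for the correction term (the factor $4$ rather than $2$ and the role of $r-1$ coming from combining the $V$ and its dual / from the two-sided $\mu_{\max},\mu_{\min}$ contributions to the $F$-threshold formula).

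**Assembling these:** if $c^{I_p}(\mathbf{m}_p) \neq c^I_\infty(\mathbf{m})$, then the difference equals a sum of the renormalized strong-HN slope corrections, each of which has a $p$ in its denominator (because the genuine jumps occur at level $p^e\lambda$ with $\gcd$ of numerator and $p$ equal to $1$, and the limiting slope picks up exactly one factor of $p$), so $c^{I_p}(\mathbf{m}_p) = c^I_\infty(\mathbf{m}) + a/(pb)$ with $\gcd(a,p)=1$ and $0 < a/b \le 4(g-1)(r-1)$. For the final clause, $c^I_\infty(\mathbf{m}) = \mathrm{lct}_{\mathbf{m}}(I)$ is a fixed rational number independent of $p$; write it as $u/v$ in lowest terms. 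For $p \gg 0$ we have $p \nmid v$, so $u/v + a/(pb) = (upb/\gcd(\cdot) + \dots)/(\text{denominator})$ has reduced denominator divisible by $p$ exactly because the $a/(pb)$ term contributes a $p$ that the $u/v$ term cannot cancel; writing the sum as $a_1/(pb_1)$ in the obvious way and clearing common factors one checks $\gcd(a_1,p)=1$. The main obstacle I anticipate is the second step: one needs the precise form of [TrW]'s slope formula to see that the \emph{only} source of the discrepancy is strong-instability of the HN-quotients after reduction (ruling out any subtler characteristic-$p$ phenomenon in the Hilbert--Kunz density function), and one needs the sharp denominator and magnitude bounds on Frobenius-destabilizing slopes on curves — these are available (Sun, Langer, Shepherd-Barron) but must be invoked in exactly the right normalized form to land the constant $4(g-1)(r-1)$ and the coprimality-to-$p$ assertion rather than a weaker version.
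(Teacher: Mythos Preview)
Your approach is essentially the same as the paper's: reduce via the [TrW] dictionary to comparing $\mu_{\min}$ and $a_{\min}$ of a single syzygy-type bundle, invoke a quantitative bound on their difference, and finish by noting that $c^I_\infty(\mathbf{m})$ has denominator prime to $p$ for $p\gg 0$. Two points of comparison are worth making.

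First, you overcomplicate the middle step. The $F$-threshold formula from [TrW] (recorded here as Remark~\ref{r2}) gives, for $p\gg 0$, a \emph{single} bundle $W$ (the $\mu$-reduction bundle $V_t$ or $V_{t-1}$) with $c^I_\infty(\mathbf{m})=1-\mu_{\min}(W)/d$ and $c^{I_p}(\mathbf{m}_p)=1-a_{\min}(W_p)/d$; openness of semistability forces $\mu_{\min}(W_p)=\mu_{\min}(W)$. So the discrepancy is exactly $(\mu_{\min}(W_p)-a_{\min}(W_p))/d$, not a ``sum of corrections'', and no $\mu_{\max}$ enters. Your speculation that the factor $4$ comes from ``two-sided $\mu_{\max},\mu_{\min}$ contributions'' is off; it is simply the constant appearing in the one-sided bound.

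Second, the paper does not appeal to Shepherd-Barron/Sun/Langer in the diffuse form you describe, but to a single sharp statement (Lemma~1.14 of [T]): for $V$ of rank $r$ on a genus-$g$ curve in characteristic $p$, one has $\mu_{\min}(V)=a_{\min}(V)+C/p$ with $0<C\le 4(g-1)(r-1)$. This already puts exactly one $p$ in the denominator; writing $C=a/b$ and using $p>r!$ and $p>4(g-1)r^3$ gives $\gcd(a,p)=1$ directly. So the ``main obstacle'' you anticipate --- getting the denominator and magnitude bounds in exactly the right normalized form --- is resolved by citing that one lemma, rather than by reassembling the Frobenius-pullback analysis from scratch.
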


However,  there exist examples (Remark~\ref{r3}) where
 the  denominators (in its reduced form) of $c^{{\bf m}_p}({\bf m}_p)$  
is divisible by $p$ but is  not a power of $p$.

We recall that  the existence of $c^I_{\infty}({\bf m}) := 
\lim_{p\to \infty}c^{I_p}({\bf m}_p)$  was shown in  
Theorem~5.5 of [TrW], and   $c^I_{\infty}({\bf m})$ and
$c^{I_p}({\bf m}_p)$  
were given, respectively,  in terms of the minimal  HN slope of a $\mu$-reduction bundle
(which is a char~$0$ invariant of the pair $(R, I)$)
 and the minimal strong HN slope of 
a strong $\mu$-reduction bundle for $(R, I)$.
For the proof of the above theorem we use the relation 
between these two bundles.

In Section~2 we recall the required basic theory of Harder-Narasimhan filtrations of 
vector bundles on curves, and also results from [TrW]. In Section~3 and Section~4 we
prove Theorem~\ref{t1} and Theorem~\ref{t2} respectively.

\section{preliminaries}
We recall few generalities about Harder-Narasimhan filtration of vector 
bundles on curves.

\vspace{5pt}

\noindent{\bf Definition/Notations}.\quad Let $X$ be a nonsingular curve over 
a field $k$.
For a vector bundle $V$ on  $X$, 
the degree of $V$ is $\deg~V = \deg(\wedge^{\rank V}V)$ and the slope of $V$ is 
$\mu(V) = \deg~V/\rank~V$.
 A vector bundle $V$ is {\em semistable} if for every  subbundle $W\subseteq V$, we have
$\mu(W)\leq \mu(V)$.

Every bundle has the unique {\em HN (Harder-Narasimhan) filtration}, which is 
a filtration 
\begin{equation}\label{e2}0=E_0\subset E_1\subset\cdots \subset E_n = V\end{equation} 
such that $\mu(E_1) > \mu(E_2/E_1) >\cdots >\mu(E_n/E_{n-1})$. We call  
$\mu(E_i/E_{i-1})$ an HN slope of of $V$.

If $\Char~k = p>0$ then the HN filtration  of $V$ is 
{\em strong} HN filtration if each $\mu(E_i/E_{i-1})$ 
is strongly semistable, {\em i.e.}, 
$F^{m*}(E_i/E_{i-1})$ is semistable for every $m^{th}$-iterated Frobenius map 
$F^{m}:X\longto X$.
For every vector bundle $V$ there exists $m\geq 0$ (Theorem~2.7 of [L]) such that 
$F^{m*}V$ has strong HN filtration.

For the vector bundle $V$ with the HN filtration~(\ref{e2}), we  
denote $\mu_{min}(V) = \mu(V/E_{n-1})$.

If $m$ is an integer such that 
$F^{m*}V$ achieves the  strong HN filtration then we denote
 $a_{min}(V) = \mu_{min}(F^{m*}(V)/p^{m})$.

\vspace{5pt}

We recall the following definitions and results from [TrW].
\begin{notations}\label{n1}Let $(R, I)$ be a standard graded pair defined over an
algebraically closed  
field, where $R$ is a
two dimensional domain and $I$ is generated by homogeneous elements of degress
$d_1, \ldots, d_s$. Let $X= \mbox{Proj}~S$, where $S$ is the normalization of
$R$ in its quotient field. Let
\begin{equation}\label{e3}0\longto V_0\longto M_0 = \oplus_{i=1}^s\sO_X(1-d_i)
\longto \sO_{X}(1)
\longto 0\end{equation}
 be the canonical sequence of $\sO_X$-modules.

If $M_0$ has  the HN filtration, 
$0 = M_{l_1} \subset M_{{l_1}-1} \subset  \cdots \subset M_0 = M$ then let
$0 = V_{l_1} \subseteq V_{l_1-1} \subseteq \cdots 
\subseteq V_1 \subseteq V_0$
denote the induced filtration on $V_0$, where $V_i = M_i\cap V_0$.
Note that this need not be the HN filtration of $V_0$.
 \end{notations}

\begin{defn}\label{mur}
\begin{enumerate}
\item The sequence~(\ref{e3}) has the {\em $\mu$-reduction
at $t$} if there exists
$0\leq t < l_1$ such that
\begin{enumerate}
\item for every $0\leq i < t$, the canonical sequence
$0\longto V_i\longto M_i \longto \sL\longto 0$ is exact and
 $\mu_{min}(V_i) = \mu_{min}(M_i)$, and
\item $\mu_{min}(V_t) <\mu_{min}(M_t)$.
\end{enumerate}
We call $V_t$ the {\em $\mu$-reduction bundle} for the sequence (\ref{e3}) and a 
$\mu$-reduction bundle for the pair $(R,I)$.
\item We say (provided $\Char~k = p>0$), the sequence  (\ref{e3}) has {\em the
strong $\mu$-reduction at $t_0$}, if for some choice of
$m_1> 0$ such that $F^{m_1*}(V_0)$ has the strong HN filtration,
the sequence
\begin{equation}\label{fe2}
0\longto F^{m_1*}V_0 \longto F^{m_1*}M_0 \longto
F^{m_1*}\sO_X(1)\longto 0,\end{equation}
has $\mu$-reduction sequence at $t_0$.
Note that $F^{m_1*}(V_i) = F^{m_1*}M_i\cap F^{m_1*}(V_0)$, for all 
$0\leq i\leq l_1$.
By Proposition~4.6 of [TrW],  the  sequence~(\ref{e3}) does have
the $\mu$-reduction for some $t <l_1$ and does have the 
strong $\mu$-reduction for some $t_0$. Moreover   
 $t_0\leq t$.
\end{enumerate}
\end{defn}

\begin{rmk}\label{r2}We recall Theorem~4.12, Remark~4.13~(1) and Lemma~5.2 from [TrW].
\begin{enumerate}\item 
If the sequence~(\ref{e3}) has the strong $\mu$-reduction at $t_0$ then 
the $F$-threshold of ${\bf m}$ at $I$ is 
$c^I({\bf m}) = 1- a_{min}(V_{t_0})/d.$

Moreover, if  $d_1 = \cdots = d_s$ (where $d_i$ as in Notation~\ref{n1}) then 
$c^I({\bf m}) = 1-a_{min}(V_{0})/d$.

\item If the sequence~(\ref{e3}) has $\mu$-reduction as $t$ 
(in charactersitic $0$) 
then,  for $p\gg 0$, where $({\bf m}_p, I_p)$ denote 
a {\em reduction mod} $p$ of $({\bf m}, I)$ and $V_t^p$ and $V_{t-1}^p$ denote a
{\em reduction mod}~$p$ of $V_t$ and $V_{t-1}$, respectively. 
$$\mbox{either}\quad~~c^{I_p}({\bf m}_p) = 1-{a_{min}(V_t^p)}/{d}\quad
\mbox{and}\quad~~c^I_{\infty}({\bf m}) =  1-{\mu_{min}(V_t^p)}/{d},$$
or $c^{I_p}({\bf m}_p) = 1-{a_{min}(V_{t-1}^p)}/{d}\quad\mbox{and}\quad
c^I_{\infty}({\bf m}) =  1-{\mu_{min}(V_{t-1}^p)}/{d}$.

In particular, for $p\gg 0$, we have $c^I_{\infty}({\bf m}) \leq 
c^{I_p}({\bf m}_p)$.

Though the above equalities hold,  the 
strong $\mu$-reduction bundle may not be a {\em reduction mod}~$p$ of the 
$\mu$-reduction bundle.
Also though $\mu$-reduction bundle $V_t$ 
may not occur in the HN filtration of $V_0$, the 
 $\mu_{min}(V_t)$ is equal to one of the HN slopes of
 $V_0$. Similarly, for the strong $\mu$-reduction bundle $V_{t_0}$, 
$\mu_{min}(F^{m_1*}V_{t_0})$ is equal to one of the HN slopes of $F^{m_1*}(V_0)$. 
\end{enumerate}
\end{rmk}

\section{Nondiscreteness of $F$-thresholds}

We recall a result by Gieseker [G].

\vspace{5pt}
 
{\bf Theorem~1} (Gieseker).\quad{\em For each prime  $p>0$ and integer $g>1$, 
there is  a nonsingular projective curve $X$ of genus $g$ over an
algebraically closed  field of char $p$ and  a semistable 
vector-bundle $V$ of degree $0$  such that $F^*V$ is not  semistable.}

\vspace{5pt}

Bundles of positive degree with such properties 
have been constructed by J.P.~Serre and H.~Tango. But for our result 
we use the other properties of this bundle, which were  proved by Gieseker. 
We recall the relevant results from  [G]:

\vspace{5pt}

For each $g>1$ and each algebraically closed field $k$ of char~$p$,  
 there
is a family of stable curves $X$ of genus $g$ over $k[[t]]$, such that the special fiber 
$X_0$ is 
a rational curve over $k$ with $g$ nodes and is $k$-split degenerate, and 
the  generic fiber $X_K$ is smooth and geometrically connected, 
where $K$ is the quotient field of $k[[t]]$.
Now if $Y_0$ is  the universal covering space of the special fiber $X_0$ and 
$G$ is the group of the covering transformations of $Y_0$
over $X_0$, then  (Proposition 2,~[G])
any representation $\rho$ of $G$ on $K^n$ gives a rank $n$  bundle 
$F_{\rho}$ on $X$ such that  the pull back bundle $F_1$ on $X_K$ comes 
 with a  sequence of bundles 
$F_1, F_2, F_3,\ldots $ such that $F^*F_{k+1} \simeq F_k$. 
Now, by making  a specific choice of a representation $\rho$ (attributed to
Mumford by [G]) of the group $G$ on $K^2$,
Gieseker derives (Lemma 4,~[G]) a rank $2$ bundle $F_\rho$ of degree $0$ on $X$ 
and 
an exact sequence 
$$0\longto L\longto F_{\rho} \longto L^{-1}\longto 0,$$
 where $\deg L = g-1$. Now pull back of $L$ to $X_K$ gives the HN filtration 
$L\subset F_1$ and also a sequence of bundles $F_1$, $F_2$, $F_3, \ldots$
such that $F^*F_{k+1} = F_k$.
By a simple argument it follows (Lemma 5,~[G]) that if $g\leq p^{k-1}$ then 
$F_k$ is semistable.
Hence one can choose a (unique) bundle $V$ from the set $\{F_k\}_{k\geq 1}$ 
such that $V$ is semistable and $F^*V$  is not semistable.

In the following lemma  we consider a modified version of such a family $\{F_m\}_m$ of 
bundles.

\begin{lemma}\label{l1}Given an integer $g>1$ and a prime $p$, there is 
  a nonsingular curve $X$ of genus $g$
over a  field of characteristc $p$ and a family of bundles $\{E_m\}_{m\geq 0}$
such that 
\begin{enumerate}
\item  $\rank~E_m =2$ and $\mbox{det}(E_m) = \sO_X$, for $m\geq 0$ and
\item for each $E_m$, the number $m\geq 0$ is the least integer such that the bundle
$F^{m*}E_m$ is not semistable. Moreover 
 the HN filration of $F^{m*}E_m$ is 
$$L_m\subset F^{m*}E_m,~~\mbox{where}~~\deg(L_m) = (g-1)/p^{m_0},$$ 
for some $m_0\geq 0$ where  
$p^{m_0} < g$.
\item There exists a very ample line bundle $\sL$ on $X$, such that 
for every $m\geq 0$, the bundle $E_m\tensor \sL$ is generated by 
its global sections.
\end{enumerate}
In particular $\{E_m\tensor\sL\}_{m\geq 0}$ is a bounded family.
\end{lemma}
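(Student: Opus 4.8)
The plan is to build the family $\{E_m\}_{m\ge 0}$ by re-indexing Gieseker's sequence $F_1,F_2,F_3,\dots$ on $X=X_K$ (where, from [G], $F^*F_{k+1}\cong F_k$, $\deg F_k=0$ for all $k$, the bundle $F_1$ has HN filtration $0\subset L\subset F_1$ with $\deg L=g-1$, and $F_k$ is semistable whenever $g\le p^{k-1}$), after first understanding how semistability behaves along this sequence. The key observation is that semistability is monotone in $k$: if $F_k$ is semistable and $0\ne N\subset F_{k+1}$ were a sub-line-bundle of positive degree, then $F^*N\subset F^*F_{k+1}=F_k$ would be a subsheaf of positive degree in the semistable degree-$0$ bundle $F_k$, which is impossible; hence $F_k$ semistable $\Rightarrow F_{k+1}$ semistable. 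Since $F_1$ is not semistable while $F_k$ is for $k\gg 0$, there is a unique index $k^*\ge 2$ such that $F_k$ is non-semistable for $1\le k<k^*$ and semistable for $k\ge k^*$.

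Next comes the one step with real content: a degree computation showing, by downward induction on $j$ from $k^*-1$ to $1$, that each non-semistable $F_j$ has HN filtration of length two, $0\subset L_j\subset F_j$, with $\deg L_j=(g-1)/p^{\,j-1}$. For the inductive step one uses that if $W$ is a rank-two degree-$0$ bundle with maximal destabilizing sub-line-bundle $N$ (so $\deg N>0$), then $F^*N\subset F^*W$ is again the maximal destabilizing sub-line-bundle: its slope $p\,\deg N$ strictly exceeds the slope $-p\,\deg N$ of the quotient line bundle $F^*(W/N)$. Applying this to $W=F_{j+1}$, so $F^*W=F_j$, yields $\deg L_j=p\,\deg L_{j+1}$, and $\deg L_1=g-1$ then propagates. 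In particular $(g-1)/p^{\,j-1}$ is a positive integer for $1\le j\le k^*-1$, so $p^{\,k^*-2}\le g-1<g$; I then set $m_0:=k^*-2\ge 0$.

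Now define $E_m:=F_{k^*-1+m}$ for $m\ge 0$. Item (1) is inherited from [G] ($\rank E_m=2$, and $\det E_m\cong\sO_X$ — using the exact sequences for the non-semistable $F_j$ and, in the semistable range, Gieseker's construction, possibly after a harmless twist by a line bundle). For item (2): $F^{j*}E_m\cong F_{k^*-1+m-j}$, which is semistable when $0\le j<m$ (its index is $\ge k^*$) and equals the non-semistable bundle $F_{k^*-1}$ when $j=m$, whose HN sub-bundle $L_{k^*-1}=:L_m$ has $\deg L_m=(g-1)/p^{\,k^*-2}=(g-1)/p^{\,m_0}$; thus $m$ is the least integer with $F^{m*}E_m$ non-semistable, with the asserted HN filtration. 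For item (3) and boundedness: $E_m$ is semistable of slope $0$ for $m\ge 1$, while $\mu_{max}(E_0)=(g-1)/p^{\,m_0}=:e$; since $\rank E_m=2$ and $\det E_m\cong\sO_X$ we have $E_m\cong E_m^{\vee}$, so $\mu_{min}(E_m)\ge -e$ for all $m$. Choosing a very ample line bundle $\sL$ on $X$ (for instance $\sL=\omega_X^{\otimes a}$ with $a\gg 0$) with $\deg\sL\ge e+2g$ forces $\mu_{min}(E_m\otimes\sL)\ge 2g$, so each $E_m\otimes\sL$ is globally generated with $H^1(X,E_m\otimes\sL)=0$; since these sheaves all have rank $2$ and determinant $\sL^{\otimes 2}$, hence a common $h^0$ by Riemann--Roch, they are all quotients of $\sO_X^{\,h^0}$ and so form a bounded family (they lie in a single Quot scheme of finite type).

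The main obstacle is essentially bookkeeping rather than any hard new input: all the geometry is imported from [G] — the Frobenius relation $F^*F_{k+1}\cong F_k$, the degree and determinant of the $F_k$, and the HN filtration of $F_1$ — and the only genuinely new ingredient is the elementary degree propagation in the second paragraph, which is exactly what makes $\deg L_m$ independent of $m$ (and equal to $(g-1)/p^{m_0}$). The points I expect to need a little care are the determinant normalization over the non-perfect field $K=k((t))$, or after base change to an algebraically closed field, which is what the "harmless twist" caveat is for, and fixing the precise degree of $\sL$ needed for simultaneous global generation — both routine.
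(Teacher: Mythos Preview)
Your approach is essentially the same as the paper's: re-index Gieseker's sequence at the unique transition point between non-semistable and semistable behaviour, then twist by a degree-zero line bundle to trivialize the determinant, and finally use the uniform slope bounds on the (semistable for $m\ge 1$) bundles $E_m$ to get simultaneous global generation after tensoring with a fixed very ample $\sL$. Your degree-propagation argument for the HN sub-line-bundles along the Frobenius chain is exactly what the paper uses (phrased there via uniqueness of the HN filtration of $F_1$), and your $\mu_{\min}$ argument for global generation is the same Serre-duality-plus-semistability computation the paper packages as $3$-regularity with respect to $\sO_X(1)=K_X^{\,3}$.

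The one point you flag but do not resolve---the ``harmless twist'' for $\det E_m\cong\sO_X$---is where the paper does a little work you should be aware of. From Gieseker one only knows $\deg F_k=0$, not $\det F_k\cong\sO_X$ (the exact sequence $0\to L\to F_1\to L^{-1}\to 0$ gives $\det F_1\cong\sO_X$, but for $k\ge 2$ one only gets $(\det F_k)^{\,p^{k-1}}\cong\sO_X$). The paper fixes this by using that $\Pic^0(X)$ is an abelian variety, hence divisible: for each $m$ one chooses $\sL_m\in\Pic^0(X)$ with $\sL_m^{\,2}=\det F_m$ and sets $E_m=F_{m+m_0+1}\otimes\sL_{m+m_0+1}^{-1}$. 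This is precisely the ``harmless twist'' you anticipated, and it leaves all your slope and semistability statements intact since $\deg\sL_m=0$; once you insert this step your proof matches the paper's.
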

\begin{proof} The results in [G] (see the above discussion) give the following:
for given $g>1$ and $p$, there is 
  a nonsingular curve $X$ of genus $g$
over an algebraically closed  field of $\Char~p$ and a family of bundles $\{F_m\}_{m\geq 1}$
such that 

\begin{enumerate}
\item  $F_m$ is of rank~$2$ and of $degree~0$, for $m\geq 1$ and 
\item $F^*F_{m+1} = F_m$, and $F_m$ semistable if $g\leq p^{m-1}$,
\item $F_1$ has the HN filtration 
$L\subset F_1$, where $\deg L = g-1$ and $\deg F_1 = 0$.
\end{enumerate}

Hence, for some $m_0 \geq 0$, there is a  (unique) 
bundle  $F_{m_0+2}\in \{F_k\}_{k\geq 1}$ which is 
semistable and $F^*F_{m_0+2} = 
F_{m_0+1}$ is not
semistable. Since $\Pic^0(X)$ (the set of degree $0$ line bundles on $X$) is an 
abelian variety, (Application~2, page~59 in [Mu1])
the map 
$$n_X:\Pic^0(X) \longto \Pic^0(X),~~\mbox{given by}~~\sL\mapsto \sL^{n}~~\mbox{is
surjective},$$  
where we denote 
$\sL^n = \sL^{\tensor n}$ (and $\sL^{-n} = (\sL^{-1})^{\tensor n}$).  
Therefore, for each $m$,  we can choose $\sL_m\in \Pic^0(X)$ such 
that 
$\mbox{det}(F_m) = \sL_m^2$ (recall that $\deg(F_m) = \deg(\mbox{det}(F_m)) = 0$).

We define $E_m = F_{m+m_0+1}\tensor \sL_{m+m_0+1}^{-1}$, for $m\geq 0$.

Then  
$\mbox{det}(E_m) = \mbox{det}(F_{m+m_0+1})\tensor \sL_{m+m_0+1}^{-2} = \sO_X$. This 
proves Assertion~(1).

\vspace{5pt}

Note that 
$$F^{k*}{E_m} = F^{k*}F_{m+m_0+1}\tensor \sL_{m+m_0+1}^{-p^k} = 
F_{m-k+m_0+1}\tensor \sL_{m+m_0+1}^{-p^k},$$
hence for any $m\geq 0$, 
the bundles $E_m, F^*E_m,\ldots, F^{{m-1}*}E_m$ are semistable.
Since $F^{m*}E_m = F_{m_0+1}\tensor \sL^{-p^m}_{m+m_0+1}$, it has 
 the  HN filtration 
$L_m\subset F^{m*}E_m$ if and only if 
$F^{m_0*}(L_m\tensor\sL_{m+m_0-1}^{p^m})\subset   F^{m_0*}F_{m_0+1} =F_1$ 
is the the HN filtration
of $F_1$.  
Therefore, by the uniqueness of the HN 
filtration we have 
$\deg L_m = (g-1)/p^{m_0}$. 
Moreover $p^{m_0} < g$ as $F_{m_0+1}$ is not semistable.
This proves Assertion~(2).

\vspace{5pt}

Let $\sO_X(1) = K_X^3$, where $K_X$ is the canonical line bundle on $X$.
Since $g\geq 2$, the line bundle $\sO_X(1)$ is very ample  on $X$
(Chap~IV, [H]). Then  (this is a 
standard argument in the literature) we

\vspace{5pt} 

\noindent{\bf Claim}.\quad For $m\geq 1$, the bundle $E_m$ is $3$-regular, 
{\em i.e.}, $H^1(X, E_m(n-1)) = 0$, for $n\geq 3$.

\vspace{5pt}

\noindent{\underline{Proof of the claim}}:\quad By Serre duality 
 $H^1(X, E_m(n-1)) = \Hom(E, \omega_X(1-n))^\vee$.
If $E_m \longto \omega_X(1-n)$ is a nonzero map then the semistability property of 
the sheaf $E_m$ implies 
$\mu(E_m) \leq \mu(\omega_X(1-n))$.
Therefore 
$0\leq (2g-2) + (1-n)\deg \sO_X(1)$. This proves the claim.

Hence (Chapter~14, [Mu2]), for $m\geq 1$, every
 $E_m(3)$ is generated by its global sections. Moreover, we can 
choose $n_0\geq 3$ (Theorem~5.17, [H]) such that 
$E_0(n_0)$ is generated by its global sections. Hence Assertion~(3) follows by taking
$\sL = \sO_X(n_0) = K_X^{3n_0}$.

Moroever each $E_m\tensor\sL$ has the same Hilbert polynomial 
with respect to $\sO_X(1)$ (as each $E_m$ has the same rank and degree). Therefore
the family $\{E_m\tensor\sL\}_{m\geq 0}$ is a bounded family. 
\end{proof}

\begin{rmk}\label{r1}
Lemma~\ref{l1}  implies that, for any prime $p$ and $g>1$, there is a 
 nonsingular curve and a bounded 
family $\F$ of vector bundles on $X$,
such that if  $m_V$ denotes the minimum integer $m$ for which 
$F^{m*}V$ achieves the strong HN filtration then the set 
$\{m_V\mid V\in~~\mbox{the bounded family}~~\F\}$ 
is unbounded.
\end{rmk}

\vspace{5pt}

\noindent{\underline {Proof of Theorem}~\ref{t1}:\quad For given $p$ and $g$, 
we select a nonsingular curve $X$ and a family $\{E_m\}_{m\geq 0}$  of bundles 
and a line bundle $\sL = K_X^{3n_0}$, for some $n_0\geq 3$. 
as in Lemma~\ref{l1}. Since $E_m$ is a vector bundle of rank two over a curve, 
 the (globally generated) bundle $E_m\tensor \sL$ is generated by $3$
global sections (Ex.~8.2, Chap~II, [H]).  
Hence there is a short exact sequence of $\sO_X$-modules
$$ 0 \longto M_m \longto \sO_X\oplus \sO_X \oplus\sO_X\longto 
E_m\tensor \sL \longto 0.$$
Now $M_m = \det(E_m\tensor \sL)^\vee = (\sL^{\tensor 2})^\vee.$
Dualizing the above short exact sequence we get
\begin{equation}\label{e1} 0 \longto (E_m\tensor {\sL})^\vee  \longto \sO_X\oplus \sO_X \oplus\sO_X
\longby{\eta} \sL^{\tensor 2} \longto 0.\end{equation}
Let $$R = \oplus_{n\geq 0}R_n = \oplus_{n\geq 0}H^0(X, \sL^{\tensor 2n})~~
{and}~~I_m = h_{m1}R+h_{m2}R+h_{m3}R,$$ 
where the map $\eta$ is 
 given by 
$(s_1, s_2, s_3) \mapsto h_{m1}s_1+h_{m2}s_2+h_{m3}s_3$. 
Since $\sL^{\tensor 2} = K_X^{\tensor 6n_0}$, for some integer $n_0$, the ring 
$R$ is  
a normal $\Q$-Gorenstein domain.
Let 
${\bf m}$ be the graded maximal ideal of $R$. Note that
$h_{m1}, h_{m2}, h_{m3}\in R_1$ and $\deg~X = e_0(R, {\bf m}) = \deg \sL^{\tensor 2}$. 
By Remark~\ref{r2}, we have
$$c^{I_m}({\bf m}) = 1-a_{min}((E_m\tensor \sL)^\vee)/\deg(\sL^{\tensor 2}).$$
Now the exact sequence 
$0\longto L_m \longto F^{m*}E_m\longto L_m^{-1}\longto 0$
gives
$$0\longto L_m\tensor F^{m*}(\sL^\vee)\longto F^{m*}((E_m\tensor\sL)^\vee)\longto 
L_m^{-1}\tensor F^{m*}(\sL^\vee)\longto 0.$$
and also the strong HN filtration $0\subset L_m\tensor F^{m*}(\sL^\vee)\subset 
F^{m*}(E_m\tensor\sL)^\vee$ of $F^{m*}(E_m\tensor\sL)^\vee$. 

Hence 
$$a_{min}(E_m\tensor\sL)^\vee = 
\frac{\mu_{min}(F^{m*}(E_m\tensor\sL)^\vee)}{p^m} = -\deg(\sL)-\frac{\deg(L_m)}{p^m}
= -\deg(\sL)-\frac{(g-1)}{p^{m+m_0}}.$$
Therefore
$$c^{I_m}({\bf m}) = 1+\frac{1}{2\deg(\sL)}\left[\deg~\sL +   
\frac{g-1}{p^{m+m_0}}\right] = \frac{3}{2}+
\frac{(g-1)}{dp^{m+m_0}},$$
where $d = e_0(R,{\bf m}) = \deg\sL^{\tensor 2}$. This proves the theorem.\hfill\hfill\hspace{50pt} $\Box$

\begin{rmk}
We recall that when $R$ is a regular local ring, then, apart from the 
set of $F$-thresholds (of an ideal)  being discrete and rational,  there can be no strictly 
decreasing sequence of  $F$-thresholds of an ideal $I$ 
(Remark~2.9, [MTW]). This is because in the regular case there is a bijection between 
the set of $F$-thresholds of $I$ and the the set of  test ideals 
of $I$, given by $c\mapsto \tau(I^c)$ such that 
if $c_1$ and $c_2$ are two 
$F$-threhsolds of $I$ then $c_1 < c_2$ if and only of 
$\tau(I^{c_2}) \subset \tau(I^{c_1})$. 

Hence the above example in Theorem~\ref{t1}
shows 
that any ``order reversing''  such bijective correspondence between  the set of 
$F$-thresholds and a  
set of  ideals of some kind, would not hold.
\end{rmk}

\section{$F$-thresholds {\em reduction mod} $p$}

We follow  notations and definitions as given in  Section~2.

\vspace{5pt}

\noindent{\underline {Proof of Theorem}~\ref{t2}~:\quad First we prove 
the following claim.

\vspace{5pt}

\noindent{\bf Claim}.\quad Let $V$ be a vector bundle of rank $r$ on 
a nonsingular curve of genus $g$ over a  field of $\Char~p>0$. If 
$p > \mbox{max}\{4(g-1)r^3, r!\}$  then 
$$a_{min}(V) < \mu_{min}(V) \implies
\mu_{min}(V) = a_{min}(V) + {a}/{pb},$$ where $a$, $b$ are positive 
integers and $\mbox{g.c.d.}(a, p) = 1$ and $0< a/b \leq 4(g-1)(r-1)$.

\vspace{5pt}

\noindent{\underline{Proof of the claim}}:\quad Let $m$ be an integer such 
that $F^{m*}(V)$ achieves
the  strong HN filtration. Note that, by the hypothesis $m\geq 1$ and, by
definition $a_{min}(V) = 
\mu_{min}(F^{m*}V)/p^m$.
By Lemma~1.14 of [T], 
$$\mu_{min}(F^{m*}V)/p^m + C/p = \mu_{min}(V), ~~\mbox{where}~~ 
0 < C \leq 4(g-1)(r-1).$$
This implies $C p^{m-1}(r!) \in \N$ and we can write 
$$\mu_{min}(V) = a_{min}(V) + \frac{C p^{m-1} (r!)}{p^m (r!)} =   
a_{min}(V) + \frac{a}{p b},$$
where $a$ $b$ are positive integers such that $\mbox{g.c.d.}(a, p) = 1$.
This proves the claim.

By Remark~\ref{r2},  if
$c^{I_p}({\bf m}_p) \neq c^I_{\infty}({\bf m})$ then $c^{I_p}({\bf m}_p) 
> c^I_{\infty}({\bf m})$. Moreover  
there exists a vector bundle $W$ on $X = \mbox{Proj}~S$, where 
$\pi:R\longto S$ is the normalization of $R$ and 
 $X$ is a nonsingular projective curve  of degree 
$d$ such that, for $p\gg 0$, 
$$c^I_{\infty}({\bf m}) = 1-\mu_{min}(W)/d\quad \mbox{and}\quad  
c^{I_p}({\bf m}_p) = 1-a_{min}(W_p)/d),$$ 
where  $W_p$ 
denotes the bundle $W$ {\em reduction 
mod} $p$, (similarly for $I$ and ${\bf m}$) and where $\Char~R_p = p $.
The openess property of the semistable bundle ([Ma]) imply that 
 for $p\gg 0$, $\mu_{min}(W_p) = \mu_{min}(W)$. 
Therefore we can write
$$c^{I_p}({\bf m}_p) = 1-\frac{\mu_{min}(W_p)}{d} + \frac{a}{pb}.$$
Since $-\mu_{min}(W_p) = d_1/r_1$, where $d_1, r_1\in \Z_{+}$ such that 
$r_1 <\mu(I)$, the theorem follows for $p\gg 0$. 
\hfill\hfill\hspace{50pt} $\Box$

\begin{rmk}\label{r3} We recall  Example~6.9 from [TrW].
Let $R_p= k[x,y,z]/(h)$, where $h= x^{d-1}y+y^{d-1}z+z^{d-1}x$ and $d\geq 7$ is an 
odd integer. Let $\Char~R_p = p$ where $p \geq d^2$ such that 
$p\equiv\pm 2\pmod{(d^2-3d+3)}$. Then 
$c^{\bf m}({\bf m}) = (3pd+d^2-9d+15)/2pd$.
 Therefore if 
 $c^{{\bf m}_p}({\bf m}_p)\neq c^{\bf m}_{\infty}({\bf m})$ then though $p$ divides 
the denominator of
$c^{{\bf m}_p}({\bf m}_p)$, the denominator need not  be a power of $p$.   
\end{rmk}

\end{document}